\newtheorem{definition}{Definition}
\newtheorem{proposition}[definition]{Proposition}
\newtheorem{remark}[definition]{Remark}
\newtheorem{corollary}[definition]{Corollary}
\numberwithin{equation}{section}
\numberwithin{definition}{section}
\newcommand{\Iff}{if\textcompwordmark f}
\newcommand{\R}[1]{\ensuremath{\mathbb{#1}}}
\newcommand{\rn}{\ensuremath{\left(\varPhi,\vect{y}\right)}}
\newcommand{\rns}{\ensuremath{\left(\varPhi^*,\vect{y}\right)}}
\newcommand{\vect}[1]{\boldsymbol{#1}}
\newcommand{\TU}[1]{\textup{#1}}
\DeclareMathOperator{\RANK}{rank}
\DeclareMathOperator{\DIFF}{d\!}
\begin{document}
\title{Generalization of two Bonnet's Theorems\\ to the relative Differential Geometry\\ of the 3-dimensional Euclidean space}

\author{{Stylianos Stamatakis, Ioannis Kaffas and Ioannis Delivos}\\ \emph{Aristotle University of Thessaloniki}\\ \emph{Department of Mathematics}\\ \emph{GR-54124 Thessaloniki, Greece}\\  \emph{e-mail: stamata@math.auth.gr}}
\date{}
\maketitle

\begin{abstract}
\noindent This paper is devoted to the 3-dimensional relative differential geometry of surfaces. In the Euclidean space $\R{E} ^3 $ we consider a surface $\varPhi 
$ with position vector field $\vect{x}$, which is relatively normalized by a relative normalization $\vect{y}
$. A surface $\varPhi^*
$ with position vector field $\vect{x}^* = \vect{x} + \mu \, \vect{y}$, where $\mu$ is a real constant, is called a relatively parallel surface to $\varPhi$. Then $\vect{y}$ is also a relative normalization of $\varPhi^*$.
The aim of this paper is to formulate and prove the relative analogues of two well known theorems of O.~Bonnet which concern the parallel surfaces (see~\cite{oB1853}).

\medskip
\noindent\emph{Key Words}: relative and equiaffine    differential geometry, parallel surfaces, Bonnet's Theorems

\medskip
\noindent\emph{MSC 2010}: 53A05, 53A15, 53A40
\end{abstract}

\section{Preliminaries}\label{Section1}
In this section  we briefly review the main  definitions, formulae and results on relative differential geometry and fix our notations. For details the reader is referred to \cite{pS62, rW35}.

In the three-dimensional Euclidean space $\R{E} ^3 $ let $\varPhi$ be a $C^r$-surface, $r\geq3$, defined on a region $U$ of $\R{R} ^2$, by an injective $C^{r}$-immersion  $\vect{x} = \vect{x}(u^1,u^2)$,   whose Gaussian curvature 
$\widetilde{K}$ never vanishes.
Let
                \begin{equation*}
                I = g_{ij}\DIFF u^i \DIFF u^j, \quad II = h_{ij}\DIFF u^i \DIFF u^j, \quad III = e_{ij}\DIFF u^i \DIFF u^j, \quad i,j = 1,2,
                \end{equation*}
be the first, second and third fundamental forms of $\varPhi$ and $g^{(ij)}$, $h^{(ij)}$ and $e^{(ij)}$
the inverse of the tensors $g_{ij}$, $h_{ij}$ and $e_{ij}$.

We denote by $\partial_i f$, $\partial_j\partial_i f$ etc. the partial derivatives of a function (or a vector-valued function) $f$ with respect to $u^i$, $i = 1,2$.
A $C^{s}$-relative normalization of $\varPhi$ is a $C^s$-mapping $\vect{y} = \vect{y}(u^1,u^2)$,  $r > s \geq 1$, defined on $U$, such that
            \begin{equation}                \label{1.1}
            \RANK \big( \big\{ \partial_1 \vect{x},\partial_2 \vect{x},\vect{y}\big\}\big) = 3, \quad
            \RANK \big(\big\{\partial_1 \vect{x},\partial_2 \vect{x},\partial_i \vect{y}\big\}\big) = 2, \quad i = 1, 2,
            \end{equation}
for all $\left(u^{1},u^{2}\right) \in U$.
We will say that the pair \rn{} is a relatively normalized surface in $\mathbb{E}^3$.
The line issuing from a point $P \in \varPhi$ in the direction of $\vect{y}$ is called the relative normal of \rn{} at $P$.
When we move the vectors $\vect{y}$ to the origin, the endpoints of them describe the relative image of \rn.

The contravariant vector $\vect{X}$ of the tangent vector space is defined by%
            \begin{equation}                \label{1.5}
            \langle \vect{X},\partial_i \vect{x} \rangle = 0, \quad
            i = 1,2, \quad \text{and}\quad \langle \vect{X},\vect{y} \rangle= 1,
            \end{equation}
where $\langle \; ,\; \rangle$ denotes the standard scalar product in $\mathbb{E}^3$.
The 
quadratic differential form
            \begin{equation*}                
            G \coloneqq - \langle \DIFF \vect{x}, \DIFF \vect{X}  \rangle
            \end{equation*}
is called the relative metric of \rn{} and is definite or indefinite, depending on whether the Gaussian curvature $\widetilde{K}$ is positive or negative, respectively.
Let
            \begin{equation*}               
            G_{ij} = - \langle  \partial_i \vect{x}, \partial_j \vect{X} \rangle
                   = - \langle  \partial_j \vect{x}, \partial_i \vect{X} \rangle = G_{ji}
             \end{equation*}
be the coefficients of the relative metric. Obviously
            \begin{equation*}               
            G_{ij} = \langle  \partial_j \partial_i \vect{x}, \vect{X} \rangle.
            \end{equation*}
From now on we shall use $G_{ij}$ as the fundamental tensor for ``raising and lowering the indices''  in the sense of the classical tensor notation.

Let $\vect{\xi}$ denote the (Euclidean) unit normal vector to $\varPhi$.
By virtue of ~ \eqref{1.1} the support function of Minkowski $q(u^1,u^2)$ of the relative normalization $\vect{y}$, which is defined by
            \begin{equation}                \label{1.15}
            q\coloneqq \langle \vect{\xi},\vect{y} \rangle
            \end{equation}
never vanishes on $U$.
Because of~\eqref{1.5}, it is
            \begin{equation}                \label{1.20}
            \vect{X} = q^{-1} \, \vect{\xi}, \quad  G_{ij} = q^{-1} \, h_{ij},  \quad G^{\left( ij \right)} = q\, h^{\left( ij \right)},
            \end{equation}
where $G^{( ij) } $ is the inverse of the tensor $G_{ ij }$.

We mention that, when a nonvanishing $C^s$-function $q(u^1,u^2)$ is given, then there exists a unique  relative normalization $\vect{y}$, which is determined by
            \begin{equation}                \label{1.25}
            \vect{y}=-\nabla^{II}\!\!\left( q,\, \vect{x} \right) + q \,\vect{\xi},
            \end{equation}
where $\nabla^{II}$ denotes the first Beltrami-operator with respect to $II$, such that the support function of the relative normalization \eqref{1.25} is the given function $q$, see~ \cite[p.197]{fM89}.

Let $\nabla^{G}_{i}$ denote the covariant derivative in the direction $u^i$ corresponding to $G$ and
            \begin{equation*}                
            A_{ijk}\coloneqq \langle \vect{X},\,\nabla^{G}_{k} \, \left(\nabla^{G}_{j}\,\partial_i \vect{x}\right) \rangle
            \end{equation*}
the (symmetric) Darboux-tensor.
By means of it the Tchebychev-vector $\vect{T}$ of the relative normalization $\vect{y}$
            \begin{equation*}                
            \vect{T}\coloneqq T^{m}\, \partial_m \vect{x},\quad \text{where\quad } T^{m}\coloneqq \frac{1}{2}A_{i}^{im}
            \end{equation*}%
is defined. The quadratic differential form
            \begin{equation*}                
            B \coloneqq \langle \DIFF \vect{y}, \DIFF \vect{X}  \rangle
            \end{equation*}
is the relative shape operator
and has the coefficients $B_{ij}$ such that
            \begin{equation}                \label{1.45}
            B_{ij} = \langle  \partial_i \vect{y}, \partial_j \vect{X}\rangle
            = \langle  \partial_j \vect{y}, \partial_i \vect{X}\rangle
            = B_{ji}.
            \end{equation}
Obviously
            \begin{equation*}                
            B_{ij} = - \langle  \partial_j \partial_i \vect{y}, \vect{X}\rangle
            = - \langle  \vect{y},\partial_j \partial_i \vect{X} \rangle.
            \end{equation*}
Then the following Weingarten type equations  are valid
            \begin{equation}                \label{1.50}
            \partial_i \vect{y} = -B_{i}^{j} \,  \partial_j  \vect{x}.
            \end{equation}%
Special mention should be made of the equiaffine normalization $\vect{y}_{AFF}$, which, on account of \eqref{1.25}, is defined by means of the equiaffine support function
                 \begin{equation}                        \label{4.12}
                 q_{AFF} \coloneqq |\widetilde{K}|^{1/4}.
                 \end{equation}
The equiaffine normalization and its homothetics, i.e., the normalizations which are constantly proportional to $\vect{y}_{AFF}$, are characterized by the vanishing of the corresponding  Tche\-by\-chev-vec\-tor.

The eigenvalues $\kappa_1$ and $\kappa_2$ of the tensor $B_{ij}$ are called relative principal curvatures of \rn{}. Their reciprocals $R_1$  (when $\kappa_1 \neq 0$) and $R_2$ (when $\kappa_2 \neq 0$) are said to be the relative principal radii of curvature.
The product $K$ and the sum $2H$ of the relative principal curvatures are called relative curvature and relative mean curvature, respectively.
They are computed by means of the formulae
            \begin{align}
            \kappa_1 \cdot \kappa_2      & = K   = \det \left(B_{i}^{j}\right),             \label{1.55}\\
            \kappa_1 + \kappa_2          & = 2H  = B_{i}^{i}.                               \label{1.60}
            \end{align}
We consider the relative lines of curvature of \rn.
They are the curves of $\varPhi$ which are characterized by the property that the relative normals along them form developable surfaces.
Whenever at least one of the surfaces $\varPhi$ and its relative image have positive Gaussian curvature, the relative lines of curvature are real and in this case
            \begin{equation*}                    
            H^2 - K \geq 0,
            \end{equation*}
holds true, see ~\cite[p.~433]{rW35}. In the rest of this paper we assume that the relative lines of curvature are real.
Then they are determined by the differential equation
                \begin{equation}                        \label{1.70}
                B_1^2 \left( \DIFF u^1 \right)^2 + \left( B_2^2 -  B_1^1\right) \DIFF u^1 \! \DIFF u^2 - B_2^1 \left(\DIFF u^2\right)^2 = 0,
                \end{equation}
cf.~\cite[p.~30]{fM82}.
Finally, the surfaces
            \begin{equation}                         \label{1.75}
            \varPhi_i \colon \vect{x}_i = \vect{x} + R_i \, \vect{y} , \,\,\, 1 = 1, 2,
            \end{equation}
are the loci of the edges of regression of the developable surfaces consisting of the relative normals along the relative lines of curvature. They are called relative centre surfaces of \rn.
\section{Relatively parallel surfaces in $\R{E}^3$}\label{Section2}
We consider a surface $\varPhi ^*$ which is parametrized by
                \begin{equation}                     \label{2.1}
                \vect{x}^* = \vect{x} + \mu \, \vect{y},
                \end{equation}
where $\mu = \mu(u^1,u^2)$ is a $C^s$-function. 
To the point $P(u^1_0,u^2_0)$ of $\varPhi$ corresponds the point $P^*(u^1_0,u^2_0)$ of $\varPhi^*$ so that their position vectors are $\vect{x}(u^1_0,u^2_0)$ and  $\vect{x}^*(u^1_0,u^2_0)$, respectively.
We first prove the following
                \begin{proposition}\label{prop:2.1}
                The surfaces $\varPhi$ and $\varPhi^*$ have parallel normals at corresponding points \Iff{} \/ $\mu = const$.
                \end{proposition}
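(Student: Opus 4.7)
The plan is to translate the geometric condition ``parallel normals'' into the analytic condition that the Euclidean unit normal $\vect{\xi}$ of $\varPhi$ is perpendicular to the tangent plane of $\varPhi^*$, and then compute.

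First I would differentiate the parametrization \eqref{2.1}, obtaining
\begin{equation*}
\partial_i \vect{x}^* = \partial_i \vect{x} + \mu \, \partial_i \vect{y} + (\partial_i \mu)\, \vect{y}, \quad i = 1,2,
\end{equation*}
and immediately rewrite $\partial_i \vect{y}$ by means of the Weingarten-type equation \eqref{1.50} to get
\begin{equation*}
\partial_i \vect{x}^* = \bigl(\delta_i^j - \mu\, B_i^{j}\bigr)\, \partial_j \vect{x} + (\partial_i \mu)\, \vect{y}.
\end{equation*}

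Next I would observe that the unit normal $\vect{\xi}^*$ of $\varPhi^*$ is parallel to $\vect{\xi}$ at corresponding points if and only if $\vect{\xi}$ is orthogonal to both tangent vectors $\partial_i \vect{x}^*$. Taking the scalar product with $\vect{\xi}$ and using $\langle \partial_j \vect{x}, \vect{\xi}\rangle = 0$ together with the definition \eqref{1.15} of the support function $q = \langle \vect{\xi}, \vect{y}\rangle$, the tangential part drops out and one is left with
\begin{equation*}
\langle \partial_i \vect{x}^*, \vect{\xi} \rangle = q \, \partial_i \mu, \quad i = 1,2.
\end{equation*}

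Since the relative normalization satisfies $q \neq 0$ on $U$ by \eqref{1.1}, the vanishing of these two inner products is equivalent to $\partial_1 \mu = \partial_2 \mu = 0$ on $U$, i.e. to $\mu = \text{const}$. This establishes both implications simultaneously. I do not expect any serious obstacle: the only subtlety is remembering that $q$ never vanishes, which is exactly what guarantees we may cancel it to conclude $\partial_i \mu = 0$.
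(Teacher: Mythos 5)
Your proposal is correct and follows essentially the same route as the paper: differentiate \eqref{2.1}, use the Weingarten-type equations \eqref{1.50} to kill the tangential part under the inner product with $\vect{\xi}$, identify the remaining term as $q\,\partial_i\mu$ via \eqref{1.15}, and cancel the nonvanishing $q$. The only cosmetic difference is that you run the equivalence in both directions at once, whereas the paper proves necessity and notes that the converse is easily verified.
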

                \begin{proof}
                We assume that the surfaces $\varPhi$ and $\varPhi^*$ have parallel normals at corresponding points. Then $\langle \partial_i \vect{x}^*,\vect{\xi} \rangle = 0$ is valid for $i = 1$, $2$, and so by means of \eqref{2.1}
                \[
                \langle \partial_i \vect{x} + \left( \partial_i \mu\right) \,\vect{y} + \mu \, \partial_i \vect{y},\vect{\xi} \rangle = 0.
                \]
                On account of \eqref{1.15} and \eqref{1.50} we find $q \, \partial_i\mu = 0$, $i = 1$, $2$, and because of $q \neq 0$ we obtain $\partial_i\mu = 0$, hence $\mu = const$. One can easily verify that the converse is valid too.
                \end{proof}
We consider a constant $\mu \neq 0$. It is readily verified from \eqref{1.50}, \eqref{1.55} and \eqref{1.60} that the vector product of the partial derivatives $\partial_i \vect{x}^* $ satisfies the relation
                \begin{equation}                     \label{2.5}
                \partial_1 \vect{x}^*  \times \partial_2  \vect{x}^* =
                A \; \partial_1 \vect{x} \times \partial_2  \vect{x},
                \end{equation}
where $A = A(u^1,u^2)$ is the  function
                \begin{equation}                    \label{2.10}
                A\coloneqq \mu^2  K - 2 \mu \,  H + 1.
                \end{equation}
When the relative curvature $K$ of \rn{} does not vanish, $A $ can be written by means of the relative principal radii of curvature $R_1$ and $R_2$ of \rn{} as follows
                \begin{equation*}                    
                A = K \left(\mu - R_1 \right) \left(\mu - R_2\right).
                \end{equation*}
In what follows we suppose that $A \neq 0$ everywhere on $U$, hence, in case $K \neq 0$, it is  $\mu \neq R_1, R_2$. Then the parametrization  \eqref{2.1} of $\varPhi ^*$ is regular. Equation \eqref{2.5} also shows that the unit normal vector $\vect{\xi^*}$ at a point $P^*$ of  $\varPhi ^*$ can be chosen to be  the same with the unit normal vector $\vect{\xi}$  of $\varPhi$ at the corresponding point $P$ of $\varPhi$.
                 We call the surface $\varPhi ^*$ a relatively parallel surface of $\varPhi$. Throughout what follows, we shall freely use for $\mu$  the expression ``relative distance". 

It is obvious that the relations \eqref{1.1} are as well  valid if the parametrization $\vect{x}(u^1,u^2)$ of $\varPhi$ is replaced by the parametrization $\vect{x}^*(u^1,u^2)$ of $\varPhi ^*$, cf. \eqref{2.1}.
Therefore $\vect{y}$ is a relative normalization for $\varPhi ^*$ and so the expression ``relatively parallel'' is fully justified.
There exist infinitely many relatively parallel surfaces \rns{} of a given surface \rn{}, each of which corresponds to a different value of $\mu$. All of them possess the same relative image with \rn.

By means of~\eqref{1.15}, (\ref{1.20}a) and ~ \eqref{1.45}, it is clear that \emph{the relatively parallel surfaces} \rn{} \emph{and} \rns{} \emph{have in common}\\
(a) \emph{the support function} $q$, \\
(b) \emph{the covector} $\vect{X}$ \emph{of the tangent vector space and} \\
(c) \emph{the covariant coefficients of their shape operator, i.e,}
                \begin{equation*}                    
                B_{ij} = B^*_{\phantom{^*}ij}.
                \end{equation*}
Property (c) generalizes a result of the equiaffine surface theory \cite[p.~174]{pS62}.
                \begin{remark}
                If the relative normalization $\vect{y}$ of $\varPhi$ is the Euclidean one \TU{(}$\vect{y} = \vect{\xi}$\TU{)}, then $q = 1$ and vice versa \TU{(}cf.~ \eqref{1.15} and \eqref{1.25}\TU{)}. In this case the concept of the relatively parallel surfaces reduces to the Euclidean one.
                \end{remark}
We consider two relatively parallel surfaces \rn{} and \rns{} and we mark with an asterisk the functions which result if $\vect{x}(u^1,u^2)$ is replaced by $\vect{x}^*(u^1,u^2)$ in the above equations. Furthermore we refer by (j*) to the formula, which results from formula (j) after this replacement.

From ~(\ref{1.20}b), \eqref{1.50} and ~\eqref{2.1} it follows
                \begin{equation}                        \label{3.1}
                h_{\phantom{^*}ij}^* = h_{ij} - \mu \,q \,B_{ij}, \quad	i,j = 1,2.
                \end{equation}
By combining (\ref{1.20}b), (\ref{1.20}*b) and \eqref{3.1} we get
                \begin{equation*}                     \label{3.5}
                G_{\phantom{^*}ij}^* = G_{ij} - \mu \,B_{ij}, \quad	i,j = 1,2.
                \end{equation*}
On account of \eqref{1.50}, (\ref{1.50}*) and \eqref{2.1}, we find
                \begin{equation*}
                B_{\phantom{^*}i}^{*j} - \mu \, B_{\phantom{^*}i}^{*k} \, B_k^j - B_i^j = 0 , \quad i,j = 1,2.
                \end{equation*}
Solving this system we get the mixed components of the shape operator of \rns{}
                \begin{equation}                           \label{3.20}
                B_{\phantom{^*}1}^{*1} = \frac{B_1^1 - \mu \, K}{A}, \quad B_{\phantom{^*}1}^{*2}  = \frac{B_1^2}{A}, \quad
                B_{\phantom{^*}2}^{*1} = \frac{B_2^1}{A},            \quad B_{\phantom{^*}2}^{*2}  = \frac{B_2^2 - \mu \, K}{A}.
                \end{equation}
By substitution in (\ref{1.55}*) and (\ref{1.60}*), we obtain for the relative curvature $K^*$ and the relative mean curvature $H^*$ of \rns{}
                \begin{align}
                K^* & = \frac {K} {A},                    \label{3.25} \\
                H^* & = \frac{H - \mu K} {A}.             \label{3.30}
                \end{align}
From \eqref{1.50} and \eqref{2.1} we find
                    \begin{equation}                    \label{3.45}
                    g^*_{\phantom{^*}ij} = g_{ij} - \mu \left( B_i^r \, g_{rj} + B_j^r \, g_{ri} \right) + \mu ^2 \,B_i^r \, B_j^s \, g_{rs}.
                    \end{equation}
Now, we consider two relatively parallel surfaces of \rn{} at relative distances $\mu_1$ and $\mu_2$, respectively.
By means of \eqref{3.25} and \eqref{3.30} we see that \emph{these surfaces have common}

(a)  \emph{relative curvature \Iff{}}\emph{}
                \begin{equation*}
                K = 0 \quad \text {or} \quad \mu_1 + \mu_2 = \frac{2H}{K} \,\, \text{and} \,\,K \neq 0,
                \end{equation*}
(b)  \emph{relative mean curvature \Iff{}}
                \begin{equation*}                       
                KH\left( \mu_1 + \mu_2\right) - K^2\, \mu_1  \mu_2 + K - 2H^2 = 0.
                \end{equation*}
Obviously \emph{a relatively parallel surface of} \rn{} \emph{at relative distance} $\mu$ \emph{has}

(a) \emph{the same relative curvature with} \rn{} \emph{\Iff{}}
                \begin{equation*}                        
                K = 0 \quad \text {or} \quad \mu = \frac{2H}{K} \,\, \text{and} \,\, K \neq 0,
                \end{equation*}
(b) \emph{the same  relative mean curvature with} \rn{}  {\Iff{}}
                \begin{equation*}                     
                H \neq 0, \,\, K \neq 0 \,\,\text{and} \,\, \mu = \frac{2H^2 - K}{KH} \quad \text{or} \quad H = 0 \,\, \text{and} \,\, K = 0.
                \end{equation*}
From (a) it is apparent that if \rn{} is a relative minimal surface with $K \neq 0$, then there is no relative minimal surface relatively parallel to it.

\section{The main results}\label{Section3}

We  are now in position to state the main results of this paper which generalize two well known theorems of O.~Bonnet in the classical differential geometry of surfaces in $\R{E}^3$, see ~ \cite{oB1853}.
                \begin{proposition}                         \label{Prop:BonnetK}
                Let \rn{} be a relatively normalized surface  of constant positive relative curvature $K$ in the Euclidean space $\mathbb{E}^3$. Then there are two relatively parallel surfaces to \rn{} which have constant relative mean curvature.
                \end{proposition}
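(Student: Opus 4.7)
The plan is to use formulas \eqref{3.25} and \eqref{3.30}, which express the relative mean curvature $H^*$ of a parallel surface as
\[
H^* = \frac{H - \mu K}{\mu^2 K - 2\mu H + 1},
\]
and to search, for constant $K>0$, for a constant $\mu$ that makes $H^*$ independent of the point of $\varPhi$. Since $K$ is assumed constant but $H$ is a priori a function on $\varPhi$, requiring $H^* \equiv c$ for some constant $c$ must be an \emph{identity} in the variable $H$ after clearing the denominator.

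Concretely, I would write $H - \mu K = c\left(\mu^2 K - 2\mu H + 1\right)$ and regard both sides as polynomials in $H$. Matching the coefficient of $H$ gives $1 = -2c\mu$, hence $c = -\frac{1}{2\mu}$, and matching the $H$-independent part yields $-\mu K = c(\mu^2 K + 1)$. Substituting $c$ reduces this to $\mu^2 = 1/K$, so that, since $K>0$, the two admissible relative distances are
\[
\mu_{1,2} = \pm \frac{1}{\sqrt{K}},
\]
with corresponding constant relative mean curvatures $H^*_{1,2} = \mp \frac{\sqrt{K}}{2}$. A direct substitution back into \eqref{3.30} confirms both values; this verification is the only computation needed and is short.

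The last step is to check that the two resulting parallel surfaces are actually well defined, i.e.\ that $A\neq 0$ on $U$ for $\mu=\pm 1/\sqrt{K}$, which by \eqref{2.10} reads $2\mp 2H/\sqrt{K}\neq 0$, equivalently $H\neq \pm\sqrt{K}$. Since we assume throughout the paper that the relative lines of curvature are real (so $H^2-K\ge 0$) and that $A\neq 0$ on $U$, this is exactly the requirement that $\varPhi$ have no relative umbilic of relative principal curvature $\pm\sqrt{K}$; under the standing hypothesis, this regularity condition is built into the assumption $A\neq 0$. I do not anticipate any serious obstacle: once the substitution into \eqref{3.30} is set up, the proof is purely algebraic, and the only subtle point is the identification of $\mu$ and $c$ by treating the identity as a polynomial identity in $H$, which is legitimate because no relation between $H$ and the constant $K$ is assumed.
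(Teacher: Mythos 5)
Your proof is correct and follows essentially the same route as the paper: both amount to substituting $\mu = \pm 1/\sqrt{K}$ into \eqref{3.30} and verifying $H^* = \mp\sqrt{K}/2$, with the condition $H \neq \pm\sqrt{K}$ guaranteeing $A \neq 0$. Your preliminary coefficient-matching in $H$ is only a device for locating these values (and, as the paper's subsequent remark observes, it establishes that there are exactly two such surfaces only when $H$ is non-constant), but since you confirm both values by direct substitution into \eqref{3.30}, the existence argument is complete.
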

                \begin{proof}
                We consider  the relatively parallel surfaces of \rn{} at relative distances $\mu = \mp 1 / \sqrt{K}$ (for $H \neq \mp \sqrt{K}$). Substitution of these values of $\mu$ in \eqref{3.30} gives
                \begin{equation*}                            
                H^* = \pm \frac{\sqrt{K}}{2} = const. \qedhere
                \end{equation*}
                \end{proof}
                \begin{remark}
                By using \eqref{3.30} one can easily confirm that if $H$ is not constant, then there are   exactly two such surfaces.
                \end{remark}
\begin{proposition}                         \label{Prop:BonnetH}
                Let \rn{} be a relatively normalized surface  of nonvanishing constant relative  mean curvature $H$ in the   Euclidean space $\mathbb{E}^3$. Then there is
                one relatively parallel surface to \rn{} of constant relative curvature and another one of constant relative mean curvature.
                \end{proposition}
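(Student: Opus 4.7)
The plan is to mimic the proof of Proposition \ref{Prop:BonnetK}: exhibit explicit values of the relative distance $\mu$ and use the transformation formulae \eqref{3.25} and \eqref{3.30} to read off that $K^*$, respectively $H^*$, reduces to a function of $H$ alone, which is by hypothesis constant.

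Motivated by the classical Bonnet theorem, I would first try $\mu = 1/(2H)$ to obtain constant relative curvature. Substituting this value into the expression \eqref{2.10} for $A$ gives
\begin{equation*}
A = \frac{K}{4H^2} - 1 + 1 = \frac{K}{4H^2},
\end{equation*}
so that \eqref{3.25} yields $K^* = K/A = 4H^2$, which is constant. Admissibility of this parallel surface requires $A \neq 0$, i.e.\ $K \neq 0$ on $U$; this will be the standing non-degeneracy assumption that I would state parenthetically, in analogy with the clause ``for $H \neq \mp \sqrt{K}$'' appearing in Proposition \ref{Prop:BonnetK}.

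Next, to obtain constant relative mean curvature, I would try $\mu = 1/H$ (well-defined since $H$ is a nonzero constant). Substitution into \eqref{2.10} gives
\begin{equation*}
A = \frac{K}{H^{2}} - 2 + 1 = \frac{K - H^{2}}{H^{2}},
\end{equation*}
and then \eqref{3.30} yields
\begin{equation*}
H^{*} = \frac{H - K/H}{A} = \frac{(H^{2}-K)/H}{(K-H^{2})/H^{2}} = -H,
\end{equation*}
which is constant. Here admissibility is the condition $A\neq 0$, i.e.\ $K \neq H^{2}$ on $U$.

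No serious obstacle is expected: both claims reduce to short algebraic substitutions in the formulae already derived in Section 2. The only points that require a line of comment are the non-degeneracy assumptions $K\neq 0$ and $K\neq H^2$ needed to guarantee that the candidate parallel surfaces are regular in the sense of Section 2 (i.e.\ $A\neq 0$). One could also add a remark, parallel to the remark following Proposition \ref{Prop:BonnetK}, observing that \eqref{3.25} and \eqref{3.30} show these two parallel surfaces are the only ones with the respective properties whenever $K$ is non-constant.
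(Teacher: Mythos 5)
Your proposal is correct and follows exactly the paper's own argument: the authors likewise take $\mu = 1/(2H)$ (for $K \neq 0$) and $\mu = 1/H$ (for $H^2 \neq K$) and substitute into \eqref{3.25} and \eqref{3.30} to obtain $K^* = 4H^2$ and $H^* = -H$. Your explicit computation of $A$ and your remarks on the non-degeneracy conditions merely spell out what the paper states parenthetically.
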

                \begin{proof}
                The proof follows that of the preceding Proposition. By considering  the relatively parallel surfaces of \rn{} at relative distances $\mu = 1 / 2H$ (for $K \neq 0$) and $\mu = 1/H$ (for $H^2 \neq K$), respectively, we find from \eqref{3.25} and \eqref{3.30}
                \begin{equation*}                            
                K^* = 4H^2 \quad \text{and} \quad H^* = -H. \qedhere
                \end{equation*}
                \end{proof}
                \begin{remark}
                By using \eqref{3.25} or \eqref{3.30} one can easily confirm that if $K$ is not constant, then there is exactly one relatively parallel surface of constant relative curvature and exactly one of constant relative mean curvature.
                \end{remark}
                \begin{remark}
                For $q = 1$ arises the Euclidean normalization and Proposition \ref{Prop:BonnetK} and the first part of Proposition \ref{Prop:BonnetH} are the original Bonnet's  theorems.
                \end{remark}
Other propositions of this type are the following
                \begin{proposition}                         \label{Prop:H/K}
                Let \rn{} be a relatively normalized surface of constant sum $R_1 + R_2$ of its relative principal radii of curvature in the   Euclidean space $\mathbb{E}^3$, which satisfies $K \neq 0$ and $H^2 - K \neq 0$. Then every relatively parallel surface to \rn{} has also constant sum of its relative principal radii of curvature and there is exactly one relatively parallel surface to \rn{} which is relatively minimal. 
                \end{proposition}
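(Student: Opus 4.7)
The plan is to exploit the fact that, when $K\neq 0$, the constancy of $R_1+R_2$ is exactly the constancy of $2H/K$, and then to read off both assertions from the transformation laws \eqref{3.25} and \eqref{3.30}. The proof will reduce to elementary algebra in $\mu$, $H$ and $K$.

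First I would compute, using \eqref{3.25} and \eqref{3.30},
\begin{equation*}
R_1^* + R_2^* \;=\; \frac{2H^*}{K^*} \;=\; \frac{2(H-\mu K)/A}{K/A} \;=\; \frac{2H}{K} - 2\mu \;=\; (R_1+R_2) - 2\mu,
\end{equation*}
which is meaningful because $K\neq 0$ together with $A\neq 0$ forces $K^*=K/A\neq 0$, so $R_1^*,R_2^*$ are well defined on $\varPhi^*$. Since $R_1+R_2$ is constant on $\varPhi$ by hypothesis and $\mu$ is a real constant (Proposition \ref{prop:2.1}), this identity immediately gives that $R_1^*+R_2^*$ is constant on $\varPhi^*$, which establishes the first assertion.

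For the second assertion I would look for the value of $\mu$ producing $H^*=0$. From \eqref{3.30} this forces $H-\mu K=0$, and with $K\neq 0$ the unique solution is $\mu=H/K$. The hypothesis that $R_1+R_2=2H/K$ is constant on $\varPhi$ ensures that $H/K$ is actually a real constant, so this $\mu$ is a legitimate relative distance in the sense of Section \ref{Section2}. Admissibility still requires $A\neq 0$ at $\mu=H/K$; direct substitution into \eqref{2.10} yields
\begin{equation*}
A \;=\; \frac{H^2}{K} - \frac{2H^2}{K} + 1 \;=\; \frac{K - H^2}{K},
\end{equation*}
and the hypothesis $H^2-K\neq 0$ makes this nonzero, so the parametrization \eqref{2.1} is regular at this relative distance. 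Uniqueness is automatic, since $H-\mu K=0$ has exactly one solution in $\mu$ when $K\neq 0$.

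I do not anticipate any serious obstacle: the proposition is essentially a direct corollary of the transformation laws \eqref{3.25} and \eqref{3.30} proved earlier, and the main conceptual point is to recognize that $R_1+R_2$ transforms under relative parallelism by a mere shift $-2\mu$. The only item worth flagging is the joint role of the two side hypotheses: the constancy of $R_1+R_2$ is what makes the distinguished value $\mu=H/K$ a genuine constant, while $H^2\neq K$ is precisely what makes that value of $\mu$ admissible as a regular relative distance.
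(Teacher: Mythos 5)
Your proposal is correct and follows essentially the same route as the paper: both derive the shift identity $R_1^*+R_2^*=R_1+R_2-2\mu$ from \eqref{3.25} and \eqref{3.30} and then obtain the unique relatively minimal parallel surface at $\mu=H/K$. Your explicit check that $A=(K-H^2)/K\neq 0$ at this value of $\mu$, using the hypothesis $H^2-K\neq 0$, is a small but welcome addition that the paper leaves implicit.
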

                \begin{proof}
                From \eqref{3.25} and \eqref{3.30} we result
                \begin{equation*}                            
                \frac{H^*} {K^*} = \frac{H} {K} - \mu,
                \end{equation*}
                or, equivalently,
                \begin{equation}                            \label{4.25}
                R_{\phantom{^*}1}^* + R_{\phantom{^*}2}^*  = R_{1} + R_{2} - 2 \mu.
                \end{equation}
                Consequently
                \begin{equation*}                            \label{4.30}
                R_{1} + R_{2} = 2c_1 = const. \iff R_{\phantom{^*}1}^* + R_{\phantom{^*}2}^* = 2 c_2 = const., \quad \text {where} \quad c_2 = c_1 - \mu.
                \end{equation*}
                As long as $ R_{1} + R_{2} = 2c_1 = const$., by choosing $\mu = c_1 $, we find from \eqref{4.25} that $H^* = 0$.
                Thus, among all the relatively parallel surfaces to \rn{}, only the one at relative distance $\mu = H/K$ from $\varPhi$ is a relative minimal surface.
                \end{proof}
                \begin{proposition}                         \label{Prop:H/K2}
                Let \rn{} be a relatively normalized surface  of constant sum $R_1 + R_2$ of its relative principal radii of curvature in the   Euclidean space $\mathbb{E}^3$, which satisfies $K \neq 0$ and $H^2 - K \neq 0$. Then  there is one relatively parallel surface to \rn{} which has the same relative curvature and opposite mean curvature with it. 
                \end{proposition}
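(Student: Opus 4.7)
The plan is to determine a constant $\mu$ so that the relatively parallel surface $(\varPhi^*,\vect{y})$ at relative distance $\mu$ simultaneously satisfies $K^* = K$ and $H^* = -H$. By \eqref{3.25} and \eqref{3.30}, these two requirements read
\[
\frac{K}{A} = K \quad \text{and} \quad \frac{H - \mu K}{A} = -H,
\]
so I reduce the question to solving this (apparently over-determined) system in $\mu$, with $A$ given by \eqref{2.10}.

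Since $K \neq 0$, the first equation forces $A = 1$, i.e.\ $\mu^2 K - 2\mu H = 0$, whose only nontrivial root (the root $\mu = 0$ merely reproduces $\varPhi$) is $\mu = 2H/K$. Then I substitute $\mu = 2H/K$ and $A = 1$ into the second equation and check that $H - (2H/K)\,K = -H$ holds identically; hence a single value of $\mu$ resolves both conditions, which also gives the uniqueness asserted by the word ``one'' in the statement.

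The crucial point, and really the only place where the hypotheses enter beyond $K \neq 0$, is to verify that the candidate $\mu = 2H/K$ is an actual \emph{constant}: without this, Proposition~\ref{prop:2.1} forbids us from calling $(\varPhi^*,\vect{y})$ a relatively parallel surface at all. But $2H/K = R_1 + R_2$, which is constant by assumption; so $\mu = 2c_1$ is admissible. I would also remark that with this value we have $A = 1 \neq 0$ automatically, so the regularity requirement on the parametrization \eqref{2.1} of $\varPhi^*$ is satisfied without invoking $H^2 - K \neq 0$ separately.

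The main (minor) obstacle is conceptual rather than computational: one must notice that the first condition $K^* = K$ together with $K \neq 0$ collapses $A$ to $1$, and this single simplification makes the second condition a tautology. Once that observation is made, the remaining work is just the identification $2H/K = R_1 + R_2$ that ties the constructed $\mu$ to the standing hypothesis.
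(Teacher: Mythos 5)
Your proof is correct and follows essentially the same route as the paper, which simply substitutes $\mu = 2H/K$ into \eqref{3.25} and \eqref{3.30} and observes that $A=1$. Your added check that $\mu = 2H/K = R_1 + R_2$ is constant (so that $\varPhi^*$ is genuinely a relatively parallel surface by Proposition~\ref{prop:2.1}) is a detail the paper leaves implicit, and is a welcome clarification rather than a different argument.
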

                \begin{proof}
                One gets easily the above results by substitution of $\mu = 2H/K$ in \eqref{3.25} and \eqref{3.30}.
                \end{proof}
\section{Some further results}\label{Section4}
From the relations  \eqref{3.25} and \eqref{3.30} we find by direct computation
                \begin{equation}                           \label{5.1}
                \frac {H^{*2} - K^*} {K^{*2}} = \frac {H^{2} - K} {K^{2}}.
                \end{equation}
Furthermore, by means of \eqref{3.45} and a straightforward calculation, we obtain
                    \begin{equation*}                       
                    \det\left(g^*_{\phantom{^*}ij} \right) = A^2 \,  \det\left(g_{ij} \right).
                    \end{equation*}
Besides, by using \eqref{3.1},  we get
                    \begin{equation*}             
                    \det\left(h^*_{\phantom{^*}ij} \right) = A \,  \det\left(h_{ij} \right).
                    \end{equation*}
From these two last equations we find that the Gaussian curvatures $\widetilde{K}$ and $\widetilde{K}^*$ of $\varPhi$ and $\varPhi^*$, respectively, are related by
                    \begin{equation}                        \label{5.15}
                    \widetilde{K}^* = \frac{\widetilde{K}}{A}.
                    \end{equation}
Taking into account equation \eqref{3.25} we obtain
                \begin{equation*}                           
                \frac{\widetilde{K}^*}{K^*} = \frac {\widetilde{K}} {K}.
                \end{equation*}
We summarize the above results in the form of a proposition:
                \begin{proposition}                        \label{prop:3.1}
                The functions
                \begin{equation*}
                \frac {H^{2} - K} {K^{2}} \quad \text{and} \quad \frac{\widetilde{K}}{K}
                \end{equation*}
                 remain invariant by the transition to anyone of the relatively parallel surfaces of $\varPhi$.
                \end{proposition}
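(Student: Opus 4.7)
The proposition is essentially a summary claim, so my plan is to verify each of the two invariances by direct substitution using the transformation formulas already derived earlier in the section.

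For the first invariant $(H^2-K)/K^2$, I would start from the two key formulas \eqref{3.25} and \eqref{3.30}, namely $K^* = K/A$ and $H^* = (H - \mu K)/A$. Plugging into $(H^{*2} - K^*)/K^{*2}$, the numerator becomes $(H - \mu K)^2/A^2 - K/A = \big[(H-\mu K)^2 - KA\big]/A^2$, while the denominator is $K^2/A^2$. The $A^2$ cancels, and expanding $(H-\mu K)^2 - KA$ using the definition $A = \mu^2 K - 2\mu H + 1$ from \eqref{2.10} yields $H^2 - 2\mu HK + \mu^2 K^2 - \mu^2 K^2 + 2\mu HK - K = H^2 - K$. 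This gives \eqref{5.1}, and the invariance follows.

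For the second invariant $\widetilde K/K$, I would compute the Gaussian curvature ratio by comparing the determinants of the first and second fundamental forms of $\varPhi$ and $\varPhi^*$. From \eqref{3.45}, a straightforward (though somewhat tedious) calculation using the Cayley--Hamilton-type identity $B_i^r B_r^j = 2H B_i^j - K \delta_i^j$ shows that $\det(g^*_{ij}) = A^2\det(g_{ij})$. Likewise from \eqref{3.1}, taking determinants and expanding the resulting $2\times 2$ determinant of $h_{ij} - \mu q B_{ij}$ (and using $G_{ij} = q^{-1}h_{ij}$ from (\ref{1.20}b) so that the cross terms collapse into $\mu^2 K - 2\mu H + 1 = A$ after normalization), one gets $\det(h^*_{ij}) = A\det(h_{ij})$. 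Hence $\widetilde K^* = \det(h^*_{ij})/\det(g^*_{ij}) = \widetilde K/A$, which is \eqref{5.15}. Dividing by \eqref{3.25} gives $\widetilde K^*/K^* = \widetilde K/K$.

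The main obstacle is purely computational, centered on verifying the determinant identity $\det(g^*_{ij}) = A^2 \det(g_{ij})$ from \eqref{3.45}; the trace $B_i^i = 2H$ and determinant $\det(B_i^j) = K$ relations \eqref{1.55}, \eqref{1.60} together with the Cayley--Hamilton identity are the tools needed to reduce the quartic-in-$\mu$ expansion to exactly $A^2$. The rest is bookkeeping. Once the two invariance identities \eqref{5.1} and \eqref{5.15} have been displayed, the proposition is an immediate consequence, as the right-hand sides depend only on $\varPhi$ and not on the relative distance $\mu$.
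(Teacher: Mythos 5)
Your proposal is correct and follows essentially the same route as the paper: the authors likewise obtain \eqref{5.1} by direct substitution of \eqref{3.25} and \eqref{3.30}, and obtain $\widetilde{K}^* = \widetilde{K}/A$ from $\det\left(g^*_{\phantom{^*}ij}\right) = A^2\det\left(g_{ij}\right)$ and $\det\left(h^*_{\phantom{^*}ij}\right) = A\det\left(h_{ij}\right)$ before dividing by \eqref{3.25}. (A minor simplification of your determinant step: \eqref{3.45} exhibits $g^*_{\phantom{^*}ij}$ as the congruence $\left(\delta_i^r - \mu B_i^r\right)g_{rs}\left(\delta_j^s - \mu B_j^s\right)$, so $\det\left(g^*_{\phantom{^*}ij}\right) = \det\left(\delta_i^j - \mu B_i^j\right)^2 \det\left(g_{ij}\right) = A^2\det\left(g_{ij}\right)$ without needing Cayley--Hamilton.)
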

Using   \eqref{3.20} we observe that the differential equations \eqref{1.70} and (\ref{1.70}*) of the relative lines of curvature of \rn{} and \rns{} respectively, are identical and consequently \emph{the relative lines of curvature of every relatively parallel surface} \rn{} \emph{correspond to each other and to the lines of curvature of the initial relatively normalized surface} \rn.
Finally, from \eqref{3.25} and \eqref{3.30} it follows
                \begin{equation*}                           
                R_{\phantom{^*}i}^* = R_i - \mu, \,\,\, i = 1, 2.
                \end{equation*}
Moreover, taking into account  \eqref{1.75} we easily see \emph{that all relative parallel surfaces of \rn{} have the same relative centre surfaces}.

On account of the well known relations
                \begin{equation*}
                e^{(ij)} = h^{(ir)} \, h^{(js)} \, g_{rs}
                \end{equation*}
and the Euclidean Weingarten equations
                \begin{equation*}
                \partial_i \vect{\xi} = -h_{ij} \, g^{(jk)} \, \partial_k \vect{x},
                \end{equation*}
one can immediately verify the following relation
                \begin{equation}                            \label{5.30}
                \nabla^{II}\!\!\left( f,\, \vect{x} \right) = - \nabla^{III}\!\!\left( f,\, \vect{\xi} \right)
                \end{equation}
for a $C^1$-function $f(u^1,u^2)$, where $\nabla^{III}\!\!\left( f,\, \vect{\xi} \right)$ is the first Beltrami-operator with respect to $III$.

We conclude this work by studying the equiaffine normalizations of $\varPhi$ and $\varPhi^*$, where, as before, \rns{} is a relatively parallel surface to the given relatively normalized surface \rn{} at relative distance $\mu$.
The  support functions of the equiaffine normalizations $\vect{y}_{AFF}$ and $\vect{y}^*_{AFF}$ of  $\varPhi$ and $\varPhi^*$, respectively,
are given by \eqref{4.12} and
                \begin{equation}                        \label{5.35}
                q_{AFF}^* \coloneqq |\widetilde{K}^*|^{1/4},
                \end{equation}
respectively. Recalling that $\varPhi$ and $\varPhi^*$ have common Gaussian mapping and in view of \eqref{1.25} and \eqref{5.30} we have
                \begin{align}
                \vect{y}_\TU{AFF}     = \nabla^{III}\!\!\left( q_{AFF},\, \vect{\xi} \right) + q_{AFF} \,\vect{\xi}, \label{5.40} \\
                \vect{y}_\TU{AFF}^*   = \nabla^{III}\!\!\left( q_{AFF}^*,\, \vect{\xi} \right) + q_{AFF}^* \,\vect{\xi}. \label{5.45} \end{align}
We prove the following proposition:
                \begin{proposition}
                The surfaces $\varPhi$ and $\varPhi^*$ have parallel affine normals at corresponding points \Iff{} the relative curvature $K$ and the relative mean curvature $H$ of \rn{} are connected  with a relation of the form
                \begin{equation}                        \label{5.50}
                \mu \, K - 2H = const.
                \end{equation}
                \end{proposition}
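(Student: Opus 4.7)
The plan is to reduce the parallelism of the affine normals $\vect{y}_{AFF}$ and $\vect{y}_{AFF}^*$ to an algebraic relation between their support functions, and then to rewrite that relation via \eqref{5.15} as a condition on $A$, hence on $\mu K - 2H$.

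First I would invoke the decompositions \eqref{5.40} and \eqref{5.45}. Because the tangential parts $\nabla^{III}\!\!\left(q_{AFF},\vect{\xi}\right)$ and $\nabla^{III}\!\!\left(q_{AFF}^*,\vect{\xi}\right)$ are linear combinations of the Weingarten vectors $\partial_j \vect{\xi}$, and because $\varPhi$ and $\varPhi^*$ share both the Euclidean unit normal $\vect{\xi}$ and the tangent plane at corresponding points, the parallelism condition $\vect{y}_{AFF}^* = \lambda\,\vect{y}_{AFF}$ for some nonvanishing scalar function $\lambda$ decomposes uniquely into
\begin{equation*}
q_{AFF}^* = \lambda\,q_{AFF} \quad \text{and} \quad \nabla^{III}\!\!\left(q_{AFF}^*,\vect{\xi}\right) = \lambda\,\nabla^{III}\!\!\left(q_{AFF},\vect{\xi}\right).
\end{equation*}
Expanding the Beltrami operator as $\nabla^{III}\!\!\left(f,\vect{\xi}\right) = e^{(ij)}\,\partial_i f\,\partial_j \vect{\xi}$ and using the linear independence of $\partial_1\vect{\xi},\partial_2\vect{\xi}$ (ensured by $\widetilde{K}\neq 0$), the tangential equation reduces to $\partial_i q_{AFF}^* = \lambda\,\partial_i q_{AFF}$ for $i=1,2$. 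Differentiating the normal-component equation and subtracting forces $(\partial_i\lambda)\,q_{AFF} = 0$, and since $q_{AFF}\neq 0$ we conclude that $\lambda$ must be a (nonzero) real constant.

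Once $\lambda$ is established to be constant, the remaining translation is routine. By \eqref{4.12}, \eqref{5.35} and \eqref{5.15},
\begin{equation*}
\lambda = \frac{q_{AFF}^*}{q_{AFF}} = \left|\frac{\widetilde{K}^*}{\widetilde{K}}\right|^{1/4} = |A|^{-1/4},
\end{equation*}
so constancy of $\lambda$ is equivalent to $A$ being constant; by \eqref{2.10} and $\mu = const$ this reads $\mu^2 K - 2\mu H = const$, and dividing by the nonzero constant $\mu$ yields the stated relation $\mu K - 2H = const$. The converse implication runs the argument backwards: if $\mu K - 2H$ is constant then so are $A$ and $\lambda_0 \coloneqq |A|^{-1/4}$, whence $q_{AFF}^* = \lambda_0\,q_{AFF}$, and the linearity of $\nabla^{III}$ propagates the same constant to the tangential parts, yielding $\vect{y}_{AFF}^* = \lambda_0\,\vect{y}_{AFF}$.

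The main obstacle is the middle step, in which one shows that the a priori function $\lambda$ is actually a real constant; this is where both the nonvanishing of $q_{AFF}$ and that of $\widetilde{K}$ enter essentially. Once this is secured, the translation into the curvature relation $\mu K - 2H = const$ is a direct substitution.
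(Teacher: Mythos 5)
Your proof is correct and follows essentially the same route as the paper: reduce parallelism of the affine normals to proportionality of the support functions via \eqref{5.40}--\eqref{5.45}, then translate through \eqref{4.12}, \eqref{5.35}, \eqref{5.15} and \eqref{2.10} into $\mu K - 2H = const$. Your only real addition is the middle step showing that the proportionality factor $\lambda$ must be constant (by splitting into normal and tangential components and differentiating $q_{AFF}^* = \lambda\, q_{AFF}$), whereas the paper simply posits a constant $c \in \R{R}$ in \eqref{5.51}; this makes your version slightly more complete than the paper's.
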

                \begin{proof}
                The affine normals  of the surfaces $\varPhi$ and $\varPhi^*$ are parallel \Iff{}
                \begin{equation}                        \label{5.51}
                \vect{y}_\TU{AFF}   = c \,\vect{y}_\TU{AFF}^*, \quad    c \in \R{R}.
                \end{equation}
                On account of \eqref{5.40} and \eqref{5.45}, equation \eqref{5.51} is equivalent to
                \begin{equation*}                       
                q_{AFF} = c \, q_{AFF}^*,
                \end{equation*}
                or by means of \eqref{4.12} and \eqref{5.35}, to
                \begin{equation*}                        
                \left|\frac{\widetilde{K}}{\widetilde{K}^*}\right| = c^4.
                \end{equation*}
                Hence, by using \eqref{5.15}, it turns out that
                \begin{equation*}                     
                A = \pm c^4,
                \end{equation*}
                or because of \eqref{2.10}
                \begin{equation*}                    
                \mu K - 2H = \frac{\pm c^4 - 1}{\mu} = const.
                \end{equation*}
                The argument can be reversed to show sufficiency, completing the proof.
                \end{proof}
                \begin{corollary}
                If there are two relatively parallel surfaces to \rn{} such that their affine normals are parallel to the affine normals of $\varPhi$ at corresponding points, then the relative curvature $K$ and the relative mean curvature $H$ of \rn{} are constant.
                \end{corollary}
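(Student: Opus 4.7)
The plan is to apply the immediately preceding Proposition twice, once for each of the two relatively parallel surfaces. Denote their (nonzero) relative distances by $\mu_1$ and $\mu_2$. The Proposition then supplies two identities on $U$ of the form
\begin{equation*}
\mu_1 \, K - 2H = c_1, \qquad \mu_2 \, K - 2H = c_2,
\end{equation*}
with $c_1, c_2 \in \R{R}$ real constants, one for each parallel surface in question.

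Next, I would eliminate $H$ by subtracting the two identities, obtaining $(\mu_1 - \mu_2)\, K = c_1 - c_2$. Provided $\mu_1 \neq \mu_2$, dividing through yields
\begin{equation*}
K = \frac{c_1 - c_2}{\mu_1 - \mu_2} = const.
\end{equation*}
on $U$. Substituting this value back into, say, the first identity gives $2H = \mu_1 K - c_1$, which is likewise constant. This proves the claim.

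There is no real obstacle here beyond a single bookkeeping check, namely that $\mu_1 \neq \mu_2$. This follows immediately from the parametrization \eqref{2.1}: two relatively parallel surfaces of \rn{} at the same relative distance coincide as point sets, so the hypothesis that the two surfaces are distinct forces $\mu_1 - \mu_2 \neq 0$, making the linear system in $K$ and $H$ nondegenerate. The corollary is therefore a direct consequence of the preceding Proposition applied twice.
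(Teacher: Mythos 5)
Your proposal is correct and is exactly the argument the paper intends (the corollary is stated without an explicit proof, as an immediate consequence of the preceding Proposition): two relations $\mu_1 K - 2H = c_1$ and $\mu_2 K - 2H = c_2$ with $\mu_1 \neq \mu_2$ force $K$ and then $H$ to be constant. Your observation that distinctness of the two parallel surfaces guarantees $\mu_1 \neq \mu_2$ is the right bookkeeping check and consistent with the paper's remark that each relatively parallel surface corresponds to a different value of $\mu$.
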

Moreover, in the special case where the relative normalization $\vect{y}$ is the equiaffine one, we obtain as corollary the following well known result (see~\cite[p.~147]{dG48} and \cite[p.~144]{pS62}):
                \begin{corollary}
                Let $\left(\varPhi,\vect{y}_{AFF}\right)$ be a surface equipped with the equiaffine normalization and $\varPhi^*$ a relatively parallel to it. Then they have the same affine normals at corresponding points \Iff{} the affine curvature $K_{AFF}$ and the affine mean curvature $H_{AFF}$ of $
                \varPhi
                $ are connected with a relation of the form \eqref{5.50}.
                \end{corollary}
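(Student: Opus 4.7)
The plan is to derive this corollary directly from the preceding proposition by specializing the relative normalization to the equiaffine one and then upgrading the conclusion ``parallel affine normals'' to ``same affine normals''.

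First I would set $\vect{y} = \vect{y}_{AFF}$, so that by definition the relative curvature $K$ and the relative mean curvature $H$ of $\rn$ become, respectively, the affine curvature $K_{AFF}$ and the affine mean curvature $H_{AFF}$ of $\varPhi$. Applying the preceding proposition in this situation yields the equivalence: the affine normals of $\varPhi$ and $\varPhi^*$ are parallel at corresponding points if and only if $\mu K_{AFF} - 2 H_{AFF} = \text{const}$, which is precisely a relation of the form \eqref{5.50}.

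Next I would upgrade parallelism to coincidence. Since now $\vect{y} = \vect{y}_{AFF}$, the parametrization \eqref{2.1} reads $\vect{x}^* = \vect{x} + \mu \, \vect{y}_{AFF}$, so the corresponding point $P^*$ lies on the affine normal line of $\varPhi$ at $P$. If in addition $\vect{y}^*_{AFF}$ is parallel to $\vect{y}_{AFF}$, then the affine normal of $\varPhi^*$ at $P^*$ has the same direction as the affine normal of $\varPhi$ at $P$ and passes through a point of it; hence the two lines coincide. The reverse implication is immediate, since coincident lines are in particular parallel. Combining this with the first step gives the stated equivalence.

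I do not expect a genuine obstacle here: the corollary is essentially a restatement of the previous proposition, and the only conceptual subtlety is the passage from parallelism to coincidence of affine normal lines. This passage works precisely because the displacement vector $\mu \, \vect{y}_{AFF}$ lies along the affine normal of $\varPhi$ itself, a feature which is characteristic of the equiaffine case and would fail for a general relative normalization.
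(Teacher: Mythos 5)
Your proposal is correct and follows the same route as the paper, which states this corollary without further argument as the specialization $\vect{y}=\vect{y}_{AFF}$ of the preceding proposition. Your explicit upgrade from ``parallel'' to ``same'' affine normals --- using that $P^{*}=P+\mu\,\vect{y}_{AFF}$ already lies on the affine normal line of $\varPhi$ at $P$ --- correctly fills in the one step the paper leaves implicit.
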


\end{document}